\documentclass[12pt]{amsart}

\usepackage{amssymb,latexsym}
\usepackage{enumerate}
\usepackage[hyperfootnotes=false]{hyperref}
\usepackage{fullpage}
\usepackage{soul}

\usepackage{fixme}
\fxsetup{
    status=draft,
    author=,
    layout=margin,
    theme=color
}

\makeatletter \@namedef{subjclassname@2010}{%
  \textup{2010} Mathematics Subject Classification}
\makeatother

\newcounter{thm} \numberwithin{thm}{section}
\newtheorem{Theorem}[thm]{Theorem}

\newtheorem*{Problem}{Problem}
\newtheorem{Claim}[thm]{Claim}






\newcommand{\thistheoremname}{}
\newtheorem*{genericthm*}{\thistheoremname}
\newenvironment{namedthm*}[1]
  {\renewcommand{\thistheoremname}{#1}%
   \begin{genericthm*}}
  {\end{genericthm*}}
\usepackage{tikz}
\usetikzlibrary{decorations.pathreplacing}
\tikzset{mybrace/.style={decoration={brace,raise=1.8mm},decorate}}
\tikzset{mybracedown/.style={decoration={brace,mirror,raise=1.8mm},decorate}}
\usetikzlibrary{math}
\usetikzlibrary{patterns}
\usetikzlibrary{calc}

\usetikzlibrary{matrix,backgrounds}
\usetikzlibrary{shapes.geometric}

\author[K. Bhowmick]{Krishnendu Bhowmick} \address{Johann Radon Institute for Computational and Applied Mathematics\\Linz, Austria}
\email{Krishnendu.Bhowmick@oeaw.ac.at}

\date{}

\begin{document}
\baselineskip=17pt

\title{A problem of Erd\H{o}s about rich distances}
\begin{abstract}
    An old question  posed by Erdős asked whether there exists a set of \(n\) points such that \(c \cdot n\) distances occur more than \(n\) times. We provide an affirmative answer to this question, showing that there exists a set of \(n\) points such that \(\lfloor \frac{n}{4}\rfloor\) distances occur more than \(n\) times. We also present a generalized version, finding a set of \(n\) points where \(c_m \cdot n\) distances occurring more than \(n+m\) times. 
\end{abstract}
\maketitle
\section{Introduction}
In a 1997 paper, Erd\H{o}s \cite{p} asked the following question:
\begin{Problem}[Erd\H{o}s]\label{problem 1} For a set of \(n\) points in a plane, can \(c \cdot n\) of the distances occur more than \(n\) times?
\end{Problem}
We provide an affirmative answer to the question by proving the following theorem: 
\begin{Theorem}\label{thm:richdistance}
    For all \(n\in \mathbb{N}\), there exists a set of \(n\) points such that \(\lfloor \frac{n}{4}\rfloor\) distances occur at least \(n+1\) times.
\end{Theorem}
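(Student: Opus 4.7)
My plan is to exhibit an explicit $n$-point configuration whose pairwise distances are arranged so that many of them have multiplicity exceeding $n$. A natural first candidate is a rectangular grid $\{0,1,\dots,a-1\}\times\{0,1,\dots,b-1\}$ with $ab=n$: the displacement vector $(p,q)$ is realized by $(a-|p|)(b-|q|)$ ordered pairs, and the distance $\sqrt{p^{2}+q^{2}}$ has unordered multiplicity equal to $\tfrac12$ times the sum of $(a-|p'|)(b-|q'|)$ over all integer representations $(p',q')$ of $p^{2}+q^{2}$.

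First I would fix an aspect ratio and count the displacements whose multiplicity already exceeds $n$. For generic $p>q>0$ the eight sign-and-swap representatives contribute $4(a-p)(b-q)$ to the multiplicity, so the condition becomes $(a-p)(b-q)>n/4$, cutting out a hyperbolic region in the $(p,q)$-plane. In the square case $a=b=\sqrt{n}$ this yields only about $(1-\log 2)n/4\approx 0.077\,n$ rich distances---well short of the target $n/4$. Any successful construction must therefore introduce further coincidences: plausible devices are choosing a vertical spacing $h$ so that many diagonals $\sqrt{p^{2}+h^{2}}$ match integer horizontal distances, superimposing translated copies of a grid to create cross-column matches, or taking several concentric regular polygons with radii tuned so that within-polygon chord lengths coincide with between-polygon distances.

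Once the point set is chosen, the remaining work is combinatorial: enumerate all displacement vectors and their pair counts, group displacements by common Euclidean length, sum the contributions, and verify that at least $\lfloor n/4\rfloor$ of the resulting distance classes have total multiplicity at least $n+1$. Small values of $n$ can be handled by explicit base cases---for instance the rhombus obtained by gluing two equilateral triangles along a common side already has $5$ pairs at its unit distance and so settles $n=4$, and analogous small configurations can be patched in whenever the asymptotic construction has boundary defects.

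The hardest part will be attaining the precise constant $\tfrac14$. The straightforward grid falls short by roughly a factor of three, so the heart of the proof must be a carefully designed coincidence structure, and the counting argument must be essentially tight---losing at most lower-order terms---in order to push the fraction of rich distances from the naive $(1-\log 2)/4$ up to the claimed $1/4$. A secondary obstacle is covering all residues of $n$ uniformly, which usually forces the construction to be parametrized modulo a small integer and the analysis to be split into a few cases.
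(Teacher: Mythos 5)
There is a genuine gap: your proposal is a research plan rather than a proof, and its main line of attack is one you yourself show cannot work. You compute that the rectangular grid yields only about $(1-\log 2)n/4\approx 0.077\,n$ distances of multiplicity greater than $n$, concede that this falls short of $\lfloor n/4\rfloor$ by roughly a factor of three, and then defer the actual content of the proof to an unspecified \qu{carefully designed coincidence structure.} No such structure is exhibited, so the statement is not proved. The difficulty you flag as the \qu{hardest part}---attaining the constant $\tfrac14$---is precisely the part that is missing.

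The idea you are missing is much simpler than a tuned grid. A single regular $k$-gon already has $\lfloor\frac{k-1}{2}\rfloor$ distinct distances, each occurring exactly $k$ times (one chord of each \qu{type} per vertex). Now take \emph{two congruent copies} of a regular $(m+1)$-gon: glued at a single shared vertex (a rotation about that vertex) when $n=2m+1$, or glued along a shared edge (a reflection in that edge) when $n=2m$. The union has exactly $n$ points, and each of the $\lfloor\frac{m}{2}\rfloor=\lfloor\frac{n}{4}\rfloor$ polygon distances now occurs at least $2(m+1)-1=n+1$ times, since at most one chord (the shared edge) is counted in both copies. Ironically, your ad hoc base case for $n=4$---two equilateral triangles glued along a side---is exactly this construction for $m=2$; had you generalized that rhombus to two regular $(m+1)$-gons sharing an edge or a vertex, rather than treating it as a boundary patch for a grid argument, you would have had the whole proof. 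The grid, with its $8$-fold symmetry classes and hyperbolic level sets, is the wrong vehicle here because its distance multiplicities are spread over too many distinct values; the regular polygon concentrates all $\binom{k}{2}$ pairs onto only $\lfloor\frac{k-1}{2}\rfloor$ distances, which is what makes the constant $\tfrac14$ attainable.
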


We also show the following generalization of Theorem \ref{thm:richdistance}, indicating that \(c_m\) distances can occur \(n+m\) times.
\begin{Theorem}\label{thm:richdistance2}
    For all \(n\in \mathbb{N}\), there exist a set of \(n \) points such that at least \(\big\lfloor \frac{n}{2(m+1)}\big\rfloor\) distances occur at least \(n +m\) times.
\end{Theorem}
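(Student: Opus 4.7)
The plan is to prove Theorem~\ref{thm:richdistance2} by extending the construction behind Theorem~\ref{thm:richdistance} from two concentric regular polygons to $m+1$ of them. Set $k = \lfloor n/(m+1) \rfloor$ and take the $n$ points as the union of $m+1$ concentric regular $k$-gons, together with at most $m$ auxiliary points placed far enough away to avoid interfering with any target distance. The candidate ``rich'' distances will be the $\lfloor k/2 \rfloor = \lfloor n/(2(m+1)) \rfloor$ distinct chord lengths of a single regular $k$-gon.

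First I would fix the combinatorial setup: the $i$-th polygon has vertices $r_i(\cos(\alpha_i + 2\pi j/k),\sin(\alpha_i + 2\pi j/k))$ for $0 \le j < k$, and I would enumerate the pair contributions to each distance. Each polygon contributes $k$ pairs (or $k/2$ for a diameter) at each of its own chord lengths, and for each pair of polygons $(i,j)$ the cross-polygon distances are indexed by angular differences of the form $(\alpha_j - \alpha_i) + 2\pi c/k$; the values of $c$ and $-c-1$ (or $-c$, depending on the rotation offset) give identical chord lengths and so each cross class collects two batches of $k$ pairs.

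Second I would choose the radii $r_i$ and rotation offsets $\alpha_i$ so that for each of the $\lfloor k/2 \rfloor$ target chord classes, all $m+1$ internal polygon contributions coincide at that distance (totalling $(m+1)k = n$ pairs), and in addition at least $m$ cross-polygon pairs land on the same value. This would yield at least $n+m$ pairs per target class, matching the claim. The case $m=1$ is expected to reproduce Theorem~\ref{thm:richdistance}: the outer polygon is a rotated, rescaled copy of the inner one, with the scaling factor and the half-sector rotation $\pi/k$ tuned so that each cross-polygon distance also equals either the inner or the outer chord of a matching step.

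Third I would verify the final count by directly summing the internal and cross contributions per target class and checking that at least $\lfloor n/(2(m+1)) \rfloor$ distinct classes reach the threshold $n+m$. Any rounding when $n$ is not divisible by $m+1$ is absorbed into the auxiliary far-away points, which contribute no pairs at the targets by construction.

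The main obstacle is the algebraic feasibility step. One has only $m+1$ real parameters (the radii), while the number of simultaneous coincidence constraints is of order $k/2$, so the conditions cannot all be independent. In Theorem~\ref{thm:richdistance} the system happens to collapse thanks to a half-angle identity that forces two apparently independent ``cross~$=$~chord'' equations to coincide under a single choice of radius; for general $m$ I expect a similar collapse, most likely by forcing the radii into a geometric progression whose common ratio is determined by chord ratios of the underlying $k$-gon, so that chord-$s$ of polygon $i$ and chord-$s'$ of polygon $i'$ become equal by design whenever $r_i\sin(s\pi/k) = r_{i'}\sin(s'\pi/k)$. Establishing this simultaneous feasibility and quantifying the resulting multiplicities is the crux of the argument and is where the bulk of the technical work should go.
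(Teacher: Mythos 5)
There is a genuine gap, and it sits exactly where you yourself place ``the bulk of the technical work'': the simultaneous feasibility of your coincidence constraints is never established, and with concentric rescaled polygons it cannot be. Asking that each of the $\lfloor k/2\rfloor$ target chord lengths be realized as a chord in every one of $m+1$ concentric regular $k$-gons of distinct radii imposes, for each target $d$ and each polygon $i$, an equation $d = 2r_i\sin(s\pi/k)$ for some step $s$ --- on the order of $m\lfloor k/2\rfloor$ constraints on only $m$ free radius ratios. A geometric progression of radii with common ratio $\sin(s'\pi/k)/\sin(s\pi/k)$ matches one chord class of each polygon to one chord class of the next, but it cannot match all $\lfloor k/2\rfloor$ of them at once, since the ratios $\sin(s'\pi/k)/\sin(s\pi/k)$ differ for different pairs $(s,s')$; for large $k$ most target classes would then receive contributions from a single polygon, i.e.\ roughly $n/(m+1)$ pairs instead of $n+m$. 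Your reading of Theorem~\ref{thm:richdistance} as the $m=1$ instance of this scheme is also not accurate: in the paper the second polygon is not a rescaled concentric copy but a \emph{congruent} copy obtained by rotating about a shared vertex (or reflecting across a shared edge), so no half-angle identity or radius tuning is involved --- congruence makes every chord class of one copy literally identical to the corresponding class of the other.

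That observation is the missing idea, and it dissolves the obstacle entirely. The paper writes $n = (m+1)k + r$ with $r \in [2,m+2]$ and takes $m+1$ congruent copies of a regular $(k+2)$-gon: $(r-1)$ of them share a single vertex $v_1$ (obtained by $r-2$ rotations about $v_1$), and the remaining $m+2-r$ are attached by reflections across edges. The point count is then exactly $(k+2)+(r-2)(k+1)+(m+2-r)k = n$, each of the $\lfloor (k+1)/2\rfloor$ chord classes is hit $k+2$ times by every copy with only one pair lost per reflection (the shared edge), giving $(k+2)(m+1)-(m+2-r) = n+m$ occurrences. All distance coincidences across copies are automatic by congruence, and the proof reduces to counting shared vertices and shared edges. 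If you want to salvage your plan, replace ``concentric rescaled polygons plus engineered cross-polygon coincidences'' by ``congruent polygons glued at vertices or edges''; the cross-polygon pairs you were hoping to tune into existence are then unnecessary.
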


\section{Proof of Theorem \ref{thm:richdistance}}

\begin{figure}[ht]
\centering
\begin{tikzpicture}[scale=1.2]

\def\radius{3}

\foreach \i in {1,2,...,5}
{
    \coordinate (v\i) at (\i*360/5 - 90:\radius);
}
\draw[thick] (v1) -- (v2) -- (v3) -- (v4) -- (v5) -- cycle;

\node[left] at (v1) {$v_1$};
\node[above right] at (v2) {$v_2$};
\node[above left] at (v3) {$v_3$};
\node[left] at (v4) {$v_4$};
\node[below] at (v5) {$v_5$};

\coordinate (v) at (v1);

\foreach \i in {2,3,4,5}
{
    \coordinate (r\i) at ($(v) + (v) - (v\i)$);
}
\draw[dashed, thick] (v1) -- (r2) -- (r3) -- (r4) -- (r5) -- cycle;

\node[below left] at (r2) {$r_2$};
\node[below right] at (r3) {$r_3$};
\node[right] at (r4) {$r_4$};
\node[above] at (r5) {$r_5$};

\end{tikzpicture}
\caption{A set of 9 points with 2 distances appearing 10 times.}
\end{figure}
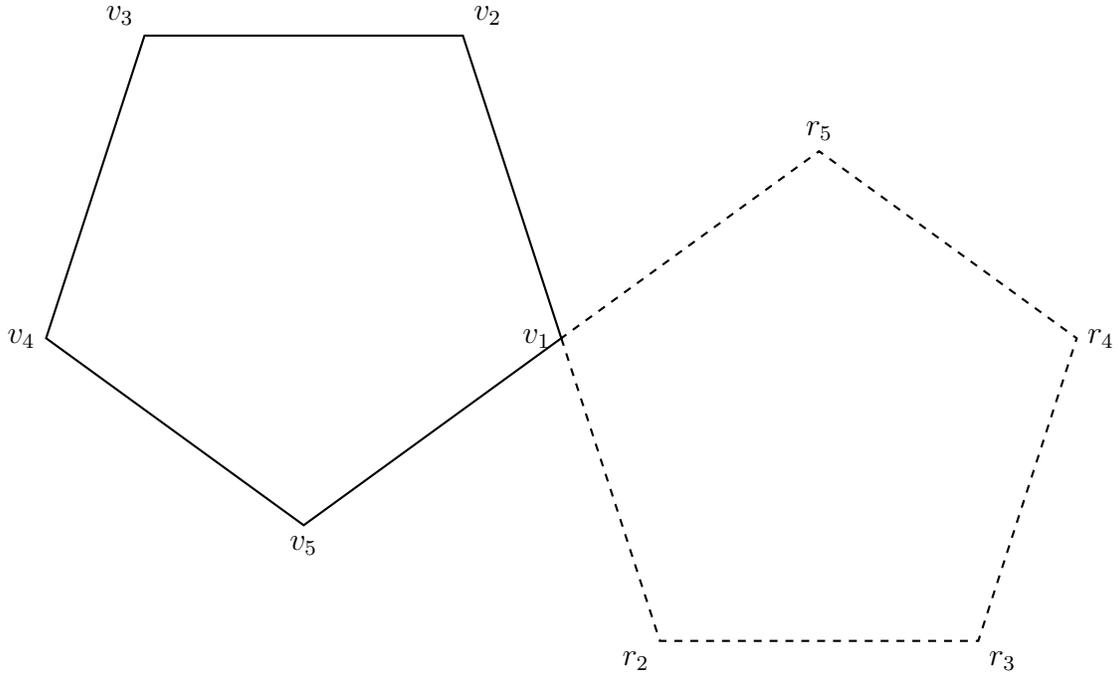

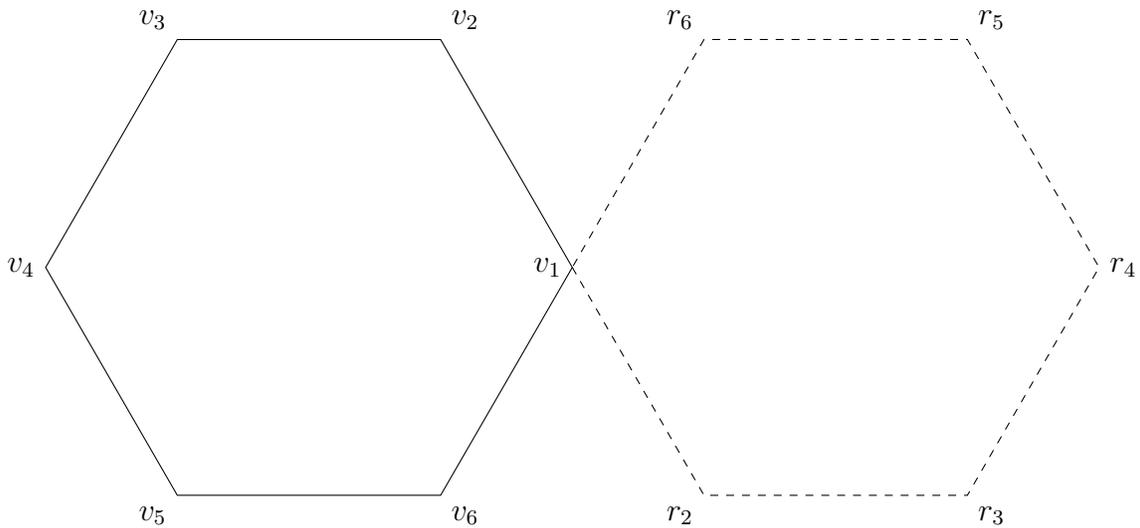
\begin{figure}[ht]
\centering
\begin{tikzpicture}[scale=3.5]

\foreach \i in {1,...,6}
    \coordinate (v\i) at ({60*(\i-1)}:1);

\draw (v1) -- (v2) -- (v3) -- (v4) -- (v5) -- (v6) -- cycle;

\foreach \i in {2,...,6}
    \coordinate (r\i) at ($2*(v1) - (v\i)$);

\draw[dashed] (v1) -- (r2) -- (r3) -- (r4) -- (r5) -- (r6) -- cycle;

\node[left] at (v1) {$v_1$};
\node[above right] at (v2) {$v_2$};
\node[above left] at (v3) {$v_3$};
\node[left] at (v4) {$v_4$};
\node[below left] at (v5) {$v_5$};
\node[below right] at (v6) {$v_6$};

\node[below left] at (r2) {$r_2$};
\node[below right] at (r3) {$r_3$};
\node[right] at (r4) {$r_4$};
\node[above right] at (r5) {$r_5$};
\node[above left] at (r6) {$r_6$};

\end{tikzpicture}
\caption{ A set of 11 points with 2 distances appearing at least 12 times.}
\end{figure}

We start with proving the following simple claim:
\begin{Claim}\label{claim:mgon}
    In a regular \(m-\)gon, \(\lfloor\frac{m-1}{2}\rfloor\) distances appear \(m\) times.
\end{Claim}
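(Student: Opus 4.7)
The plan is to label the vertices of the regular $m$-gon as $v_0, v_1, \dots, v_{m-1}$ around the circle, and exploit the fact that the distance $|v_i - v_j|$ depends only on the cyclic gap $k = \min(|i-j|,\, m-|i-j|)$. Thus distances are classified by $k \in \{1,2,\dots,\lfloor m/2 \rfloor\}$, and it remains to count how many unordered pairs $\{v_i,v_j\}$ realize each gap.

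For each fixed $k$ with $1 \le k < m/2$, the pairs $\{v_i, v_{i+k \bmod m}\}$ as $i$ ranges over $\{0,1,\dots,m-1\}$ are all distinct, giving exactly $m$ unordered pairs realizing that distance. The only exceptional case is $k = m/2$, which occurs precisely when $m$ is even; here the map $i \mapsto \{v_i, v_{i+m/2}\}$ is two-to-one, producing only $m/2$ pairs.

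Counting the values of $k$ with $1 \le k < m/2$: when $m$ is odd this gives $\frac{m-1}{2}$ distances, and when $m$ is even this gives $\frac{m}{2} - 1 = \frac{m-2}{2}$ distances. In both parities this is exactly $\lfloor \frac{m-1}{2} \rfloor$, each occurring $m$ times, which is what the claim asserts. The argument is essentially a cyclic symmetry computation with a single case split at $k = m/2$, so there is no real obstacle beyond tracking the even/odd case distinction cleanly.
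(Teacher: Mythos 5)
Your argument is correct and follows essentially the same route as the paper: classify distances by the cyclic gap $k$ and observe that each gap class with $k \le \lfloor\frac{m-1}{2}\rfloor$ is realized by exactly $m$ vertex pairs. You are in fact somewhat more careful than the paper's one-line proof, since you explicitly verify that the $m$ pairs for a fixed gap are distinct and isolate the degenerate diameter case $k = m/2$ for even $m$.
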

\begin{proof} 
Observe that in a regular \(m-\)gon \(v_1\dots v_{m}\), the distances \(\|v_i - v_{(i+k)}\|\) and \(\|v_j - v_{(j+k)}\|\) are equal for all \(i,j \in [m]\) and some \(k\in [\lfloor\frac{m-1}{2}\rfloor]\). Thus, we conclude \(\lfloor\frac{m-1}{2}\rfloor\) of the distances are repeating \(m\) times.
\end{proof}

\begin{proof}[Proof of Theorem ~\ref{thm:richdistance}] 
For \(n < 4\) the statement is vacuously true. Hence, we will assume \(n \geq4\). We will consider two cases: Case 1 for \(n\) odd and Case 2 for \(n\) even.
 
 \textbf{Case 1} Since \(n\) is odd, let \(n = 2m +1\). Consider an \((m+1)-\)gon  \(v_1\dots v_{m+1}\).   From Claim \ref{claim:mgon}, \(\lfloor\frac{m}{2}\rfloor\) of the distances are repeated \(m+1\) times. Now, rotate the \((m+1)-\)gon around vertex \(v_1\) to get a new \((m+1)-\)gon  \(v_1 r_2\dots r_{m+1}\). Again, \(\lfloor\frac{m}{2}\rfloor\) of the distances are repeating \(m+1\) times in the new \((m+1)-\)gon. Since \(v_1\) is the only common vertex between the two \((m+1)-\)gons, the total number of vertices in the two \((m+1)-\)gons is \[2\cdot (m+1) - 1 = 2m + 1 = n.\] 
Also observe that as \(n\) is odd, \(\lfloor\frac{m}{2}\rfloor = \lfloor\frac{n-1}{4}\rfloor = \lfloor\frac{n}{4}\rfloor\) of the distances occur \[2\cdot (m+1) = 2m + 2 = n +1\] times. Hence, we get a set of \(n\) points \(\{v_1,\dots,v_{m+1}, r_2,\dots,r_{m+1}\}\) where \(\lfloor \frac{n}{4}\rfloor\) distances occur at least \(n+1\) times. This concludes Case 1.

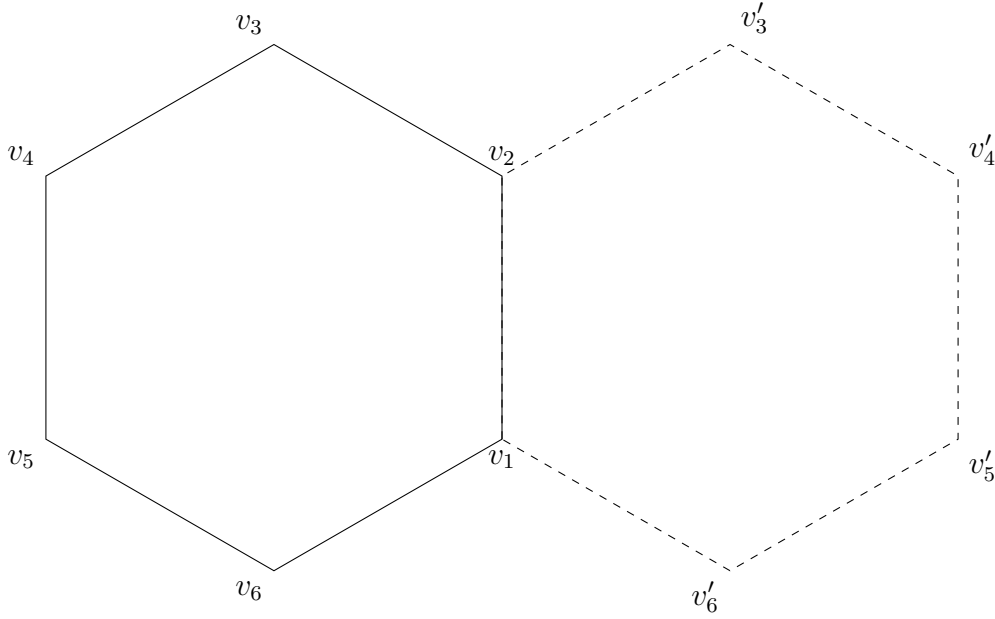
\begin{figure}[ht]
\centering
\begin{tikzpicture}[scale=3.5]

\def\side{1}

\coordinate (v1) at (0,0);
\coordinate (v2) at (0,1);
\coordinate (v3) at ({-sqrt(3)/2}, {1.5});
\coordinate (v4) at ({-sqrt(3)}, 1);
\coordinate (v5) at ({-sqrt(3)}, 0);
\coordinate (v6) at ({-sqrt(3)/2}, {-0.5});

\coordinate (v1') at (0,0);
\coordinate (v2') at (0,1);
\coordinate (v3') at ({sqrt(3)/2}, {1.5});
\coordinate (v4') at ({sqrt(3)}, 1);
\coordinate (v5') at ({sqrt(3)}, 0);
\coordinate (v6') at ({sqrt(3)/2}, {-0.5});

\draw (v1) -- (v2) -- (v3) -- (v4) -- (v5) -- (v6) -- cycle;

\draw[dashed] (v1') -- (v2') -- (v3') -- (v4') -- (v5') -- (v6') -- cycle;

\node[below] at (v1) {$v_1$};
\node[above] at (v2) {$v_2$};
\node[above left] at (v3) {$v_3$};
\node[above left] at (v4) {$v_4$};
\node[below left] at (v5) {$v_5$};
\node[below left] at (v6) {$v_6$};

\node[above right] at (v3') {$v'_3$};
\node[above right] at (v4') {$v'_4$};
\node[below right] at (v5') {$v'_5$};
\node[below left] at (v6') {$v'_6$};

\end{tikzpicture}
\caption{A set of 10 points with 2 distance appearing at least 11 times.}
\end{figure}

\textbf{Case 2}
     The proof is similar to that of Case 1. However, instead of rotating a regular polygon around one of its vertices, we will reflect a regular polygon on one of its edges. 

    Since \(n\) is even, let \(n = 2m\). Consider an \((m+1)-\)gon  \(v_1\dots v_{m+1}\).   From Claim \ref{claim:mgon}, \(\lfloor\frac{m}{2}\rfloor\) of the distances are repeated \(m+1\) times. Now, reflect the \((m+1)-\)gon over the edge \(v_1v_2\) to get a new \((m+1)-\)gon  \(v_1 v_2 v'_3\dots v'_{m+1}\). Again, \(\lfloor\frac{m}{2}\rfloor\) of the distances are repeated \(m+1\) times in the new \((m+1)-\)gon. Since \(v_1\) and \(v_2\) are the only common vertices between the two \((m+1)-\)gons, the total number of vertices in the union of the two \((m+1)-\)gons is \[2\cdot (m+1) - 2 = 2m  = n.\] Also, observe that the only distance common between the two \((m+1)-\)gons is \(\|v_1 - v_{2}\|\), and repeating only for the edge \(v_1v_2\). Thus, \(\lfloor\frac{m}{2}\rfloor = \lfloor\frac{n}{4}\rfloor\) of the distances occur at least \[2\cdot (m+1) - 1= 2m + 1 = n +1\] times. Hence, we get a set of \(n\) points \(\{v_1,\dots,v_{m+1}, v'_3,\dots,v'_{m+1}\}\) where \(\lfloor \frac{n}{4}\rfloor\) many of the distances occur at least \(n+1\) times. This concludes Case 2 and proves the theorem.

\end{proof}

\section{Proof of Theorem ~\ref{thm:richdistance2}}

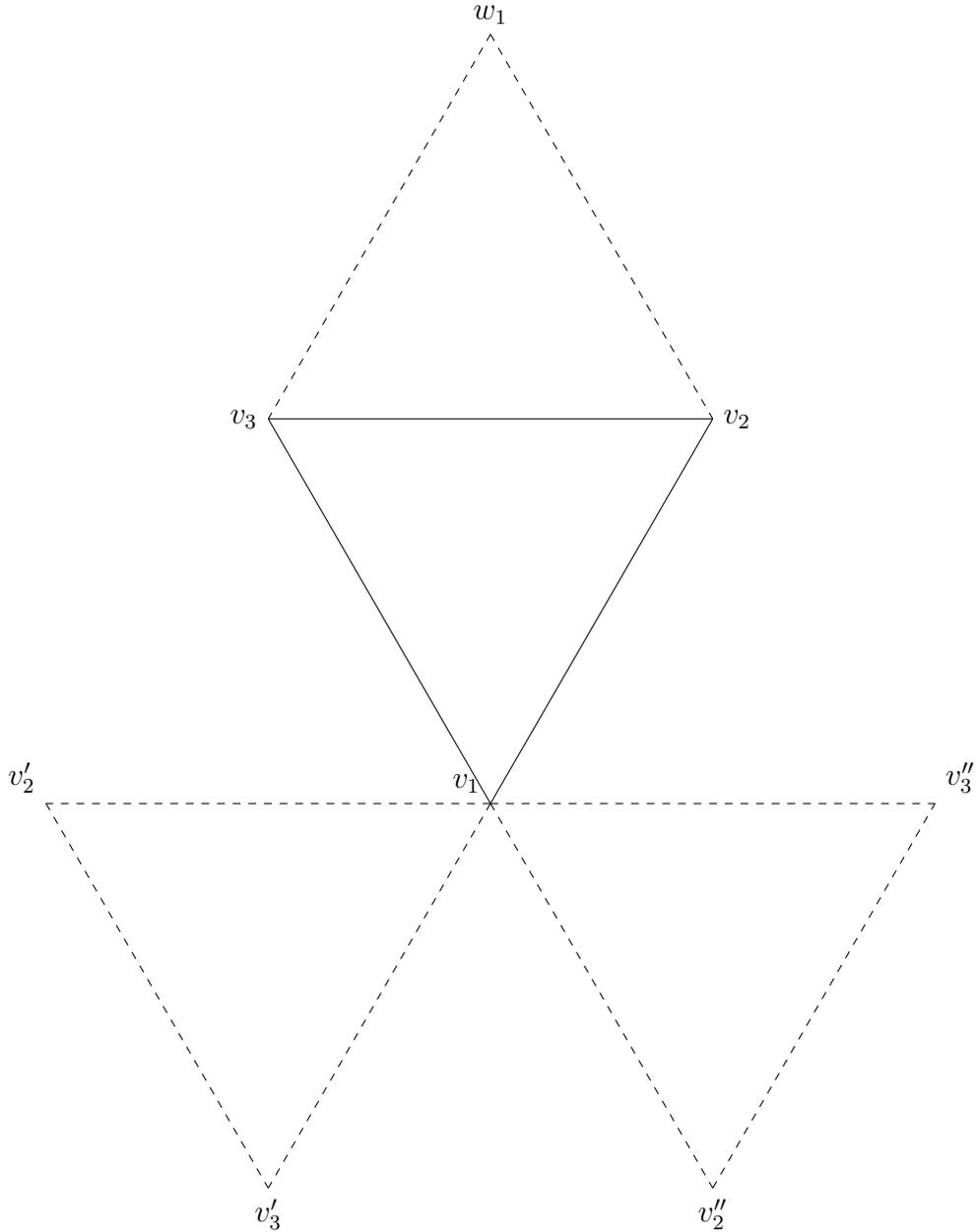
\begin{figure}[ht]
\centering
\begin{tikzpicture}[scale=3]

\foreach \i in {1,...,6}
    \coordinate (v\i) at ({60*(\i-1)}:2);

\coordinate (mid23) at ($(v2)!0.5!(v3)$);

\coordinate (w1) at ($2*(mid23) - (0,0)$);

\node[right] at (v2) {$v_2$};
\node[left] at (v3) {$v_3$};
\node[above left] at (v4) {$v'_2$};
\node[below] at (v5) {$v'_3$};
\node[below] at (v6) {$v''_2$};
\node[above right] at (v1) {$v''_3$};
\node[anchor=south east] at (0,0) {$v_1$};
\node[above] at (w1) {$w_1$};

\draw (v2) -- (v3);
\draw[dashed] (v4) -- (v5);
\draw[dashed] (v6) -- (v1);
\draw[dashed] (0,0) -- (v4);
\draw[dashed] (0,0) -- (v5);
\draw[dashed] (0,0) -- (v6);
\draw[dashed] (0,0) -- (v1);
\draw (0,0) -- (v2);
\draw (0,0) -- (v3);

\draw[dashed] (w1) -- (v2);
\draw[dashed] (w1) -- (v3);
\end{tikzpicture}
\caption{A set of 8 points with 1 distance appearing at least 11 times. The diagram consists of two rotations and one reflection of the triangle \(v_1v_2v_3\).}
\end{figure}

\begin{proof}[Proof of Theorem ~\ref{thm:richdistance2}] 
For \(n < m+3\) the statement is vacuously true hence, we will assume \(n \geq m+3\). Let \(n = (m+1)k + r\) for some \(r \in [2,m+2]\). To prove this theorem, we start with a regular \((k+2)-\)gon \(v_1\dots v_{k+2}\). Fix a vertex, say \(v_1\), and take \((r-2)\) arbitrary rotations of the \((k+2)-\)gon around  \(v_1\), resulting in a total of \((r-1)\) regular \((k+2)-\)gons with a common vertex \(v_1\). Now iteratively reflect the \((k+2)-\)gon over an edge \(v_i,v_{i+1}\) for some \(i\in [k]\), then chose another edge of any the \((k+2)-\)gon and reflect again with a total of \((m+2-r)\) reflections.   Hence, the total number of points is \[(k+2)+(r-2)(k+1)+(m+2-r)(k) = (m+1)k+r = n.\]

Observe that from Claim \ref{claim:mgon} \(\lfloor\frac{k+1}{2}\rfloor\) of the distances repeat \(k+2\) times in each \((k+2)-\)gon, with only repetition of one edge for each reflection.  Since there are \(m+2-r\) reflections in total, \(\lfloor\frac{k+1}{2}\rfloor\) distances appear at least \[(k+2)(m+1)-(m+2-r) = [(m+1)k+r]+m = n + m\]
times. Finally, since \(n = (m+1)k + r\) we have, \[\Big\lfloor\frac{k+1}{2}\Big\rfloor \geq \Big\lfloor\frac{n}{2(m+1)}\Big\rfloor,\] and we conclude that at least \(\big\lfloor\frac{n}{2(m+1)}\big\rfloor\) of the distances appear at least \(n+m\) times. 
\end{proof}

\section{Further Research}
In \cite{p}, the main problem of this paper was mentioned in the context of the previously conjectured Erdős' distinct distance problem \cite{pe} from 1946.

\begin{Problem}[Erdős' Distinct Distance Problem]
    Does every set of \(n\) distinct points in \(\mathbb{R}^2\) determine \(\gg n/\sqrt{\log n}\) many distinct distances?
\end{Problem}

Erdős' distinct distance problem was almost settled (with a remaining gap of \(\sqrt{\log{n}}\)) by Guth and Katz \cite{gk}. In the same paper \cite{p} Erdős also mentioned another question of himself and Pach.

\begin{Problem}[Erdős and Pach]
    Let \(A\subset \mathbb{R}^2\) be a set of \(n\) points. Must there be two distances which occur at least once but between at most \(n\) pairs of points?
\end{Problem}

Pannwitz and Hopf \cite{hp} proved that the largest distance between points of \(A\) can occur at most \(n\) times, but it remains unknown whether a second such distance must occur. Erdős and Pach believe that such a distance exists.

Another popular distance problem of Erdős mentioned alongside the distinct distance problem \cite{pe} is known as Erdős' unit distance problem.

\begin{Problem}[Erdős' Unit Distance Problem]
    Does every set of \(n\) distinct points in \(\mathbb{R}^2\) contain at most \(n^{1+O(1/\log\log n)}\) pairs which are distance \(1\) apart?
\end{Problem}

This bound would be best possible as it is achievable for the integer lattice. The best known upper bound is \(O(n^{4/3})\), due to Spencer, Szemerédi, and Trotter \cite{sst}.

\section*{Acknowledgements}
The author was supported by the Austrian Science Fund FWF Project P 34180. I am deeply grateful to Oliver Roche-Newton for his invaluable comments and suggestions. I also thank Alexander Polyanskii and Adam Sheffer for their feedback on an earlier draft of this paper.

\end{document}